\begin{document}

\title{A Structural Condition on Point Sets \\with Few Distinct Dot Products}

\author{Anshula Gandhi}  \address{Department of Pure Mathematics and Mathematical Statistics \\ Centre for Mathematical Sciences \\ University of Cambridge \\ Cambridge, UK} \email{ag2163@cam.ac.uk}

\begin{abstract}
    The distinct dot products problem, a variant of the Erdős distinct distances problem, asks ``Given a set $P_n$ of $n$ points in $\mathbb{R}^2$, what is the \emph{minimum} number $|D(P_n)|$ of distinct dot products they determine?'' The best proven lower bound is $|D(P_n)| = \Omega(n^{2/3+7/1425})$, due to work by Hanson--Roche‐Newton--Senger, and a recent improvement by Kokkinos. However, the best known construction determines $\Theta(n)$ dot products. We provide a structural condition that a point configuration $P_n$ would have to satisfy in order to have `few' dot products, by which we mean that $|D(P_n)| < n^{\frac{3}{4}(1-\epsilon)}$ for some $\epsilon > 0$. 
\end{abstract}
\maketitle

\vspace{-.75cm}

\section{Introduction} \label{sec:intro}

The distinct distances problem was first posed in 1946 by Erdős \cite{erdos1946sets}: given $n$ points in $\mathbb{R}^2$, what is the minimum number of distinct distances they can determine?  We are motivated by the following dot product variant of the problem, as discussed in \cite{steinerberger2010note, garibaldi2011erdos}. Given $n$ points in $\mathbb{R}^2$, what is the minimum number of distinct dot products they can determine? 

\begin{conjecture} 
    Suppose that $(P_n)_{n \in \mathbb{N}}$ is a sequence of point configurations, where each $P_n$ is a set of $n$ distinct points $\{p_1, \dots, p_n\}$ in $\mathbb{R}^2$. Let its set of dot products be given by $D(P_n):= \set{p_i \cdot p_j \mid  p_i, p_j \in P_n}$. Then $|D(P_n)| \gg n$.\footnote{We use $f(n) \ll g(n)$   to mean $f(n) = O(g(n))$.}
\end{conjecture}

Simple constructions show that the conjecture, if true, is sharp. For example, if $P_n$ consists of $n$ multiples of a single vector with the multiples in geometric progression, then $|D(P_n)|=2n-1$, and if $P_n$ consists of $n$ points equally spaced on a circle centered at the origin, then $|D(P_n)|=\lfloor n/2\rfloor+1$.  However, the best known lower bound is only $|D(P_n)| \gg n^{2/3+c}$ for a small constant $c$, so there is a sizeable gap \cite{Hanson2023, Kokkinos2025}.

Many variants of the dot products problem have been studied over the last decade: sampling points from a finite field or ring instead of the plane \cite{Covert2015, Blevins2021, Crosby2022}, restricting to counting dot products between successive pairs of points in a `chain' of points \cite{Kilmer2024}, bounding triples of points that determine a fixed pair of dot products \cite{Barker2015, Lund2015}, bounding distinct dot products in a dot product `tree' (where vertices are points in the configuration, and edge weights are given by dot products between points) \cite{Autry2022}, and a dot product variant of the Falconer distance problem \cite{bright2026pinned}. 

Our main result shows that any point set with $|D(P_n)| < n^{\frac{3}{4}(1-\epsilon)}$ (where $\epsilon > 0$) must satisfy a particular structural property which seems highly constraining.  Informally, this property is the existence of many rich lines with many closely-clustered points.  This result suggests that improving the lower bound to $|D(P_n)| \gg n^{3/4}$ might not be totally out of reach.

\subsection{The main result}\label{the-main-result}

To state our main result, we first formalize a particular notion of `closeness' of points along a line.

\begin{defn}[$b$-close]
    Let us call a pair $p,q$ of points along a radial line \emph{$b$-close} for some $b \in (0,1)$ if $|p|<|q|$ and $|p|/|q|$ falls within the interval $(b,1)$.
\end{defn}

Our main theorem identifies a property that must hold for any set $P$ with few dot products.

% \medskip

\begin{theorem}[Existence of many clustered lines] \label{many-clustered-lines}
    Fix $\epsilon \in (0,1]$ and $\delta \in [0,1]$. Let $P$ be a set of $n$ points in $\mathbb{R}^2$, with $n$ sufficiently large, where $|D(P)| < \left( \frac n2 \right)^{\frac34(1-\epsilon-\delta)}$. Then $P$ contains a subset $P'\subseteq P$ with $|P'|\ge  \frac n2 $ which can be partitioned into disjoint sets $P'=L_1\cup\cdots\cup L_k$, each contained in a radial line, such that $|L_i|\ge k_1 \left( \frac n2 \right)^{\frac12(1-\epsilon+3\delta)}$ and at least $\floor{(1-\left(\frac n2\right)^{-3\delta})|L_i|}$ consecutive pairs of points in $L_i$ are $(1-k_2 \left( \frac n2 \right)^{-2 \epsilon})$-close, where $k_1 = 2^{-2/3}$ and $k_2 = 2^{19/3}\pi^2$.
\end{theorem}

The theorem says that any configuration with very few dot products contains a large subset supported on rich radial lines, and that along each such line almost all consecutive gaps are very small (i.e. the ratio of their magnitudes is very close to 1).\footnote{Note that although most pairs of points are $b$-close, there may be gaps from one $b$-close pair to the next closest $b$-close pair, and as such, there may be multiple `clusters' of such points along such a line.}  Informally, $\epsilon$ determines how closely-clustered these points are, and $\delta$ determines how many consecutive pairs are closely-clustered.

Note this result describes various conditions on point sets $P$ with fewer than $n^{3/4}$ dot products.  For example, if we wish to study configurations with fewer than $n^{7/10}$ dot products, we may choose $\epsilon=1/30$ and $\delta=1/30$. In this case, the theorem guarantees the existence of many radial lines $L$ each containing at least $c_1 n^{8/15}$ points, where on each line, at least $(1-n^{-1/10})|L|$ pairs of consecutive points are $(1-c_2n^{-1/15})$-close for some $c_1$ and $c_2$.

\subsection{Outline of the proof}

The structure of the proof is roughly as follows.

We first outline some simple observations showing that if a point configuration has few distinct dot products, it must have both a rich line and a rich circle (this is done in Section \ref{preliminaries}).

We then show that if the points on the rich line are sufficiently well-spaced, the configuration determines many distinct dot products (this is done in Section \ref{line-and-circle-configurations}). Therefore, for a point configuration to have few dot products,  the rich line must contain many such clusters of points. Finally, we conclude that if every such set of points has one rich clustered line, then a large subset of the point set can be partitioned into rich clustered lines.

\section{Preliminaries}\label{preliminaries}

We first note the following quickly-derived facts.

\subsection{Existence of a rich line}\label{line-configurations}

\begin{lemma}[Bounding dot products on a line] \label{bounding-dot-products-on-line}
    If we have a set $L\subset \R^2$ consisting of $n$ points on a radial line, then they determine at least $n$ distinct dot products. 
\end{lemma} 

\begin{proof}     
    Rotating a point set does not change the number of dot products, so without loss of generality, we can assume the points are on the $x$-axis, with $x$-coordinates given by $\ell \subseteq \R$. Note that the size of the set of dot products $|L \cdot L|$ is the same as the size of the product set $|\ell \ell|$, and product sets in $\R$ have size at least $n$. 
\end{proof} 

Note that the above lemma is not sharp.\footnote{The bound could be improved to $2n-2$ if we assume $0 \notin L$.  Throughout this paper, as in previous work on the distinct dot products problem, we assume we are dealing with point sets $P$ such that $0 \notin P$.}  However, a lower bound of $n$ suffices for our purposes and helps simplify some of the following proofs.

We define the \textit{supporting lines} of a point configuration as follows:

\begin{defn}[Supporting lines]
    Given a set of points $P \subset \mathbb{R}^2$, let their \textit{supporting lines} be the set of radial lines which also intersect some point $p \in P$.
\end{defn}

\begin{figure}[H] \centering        
    \includegraphics[height=80px]{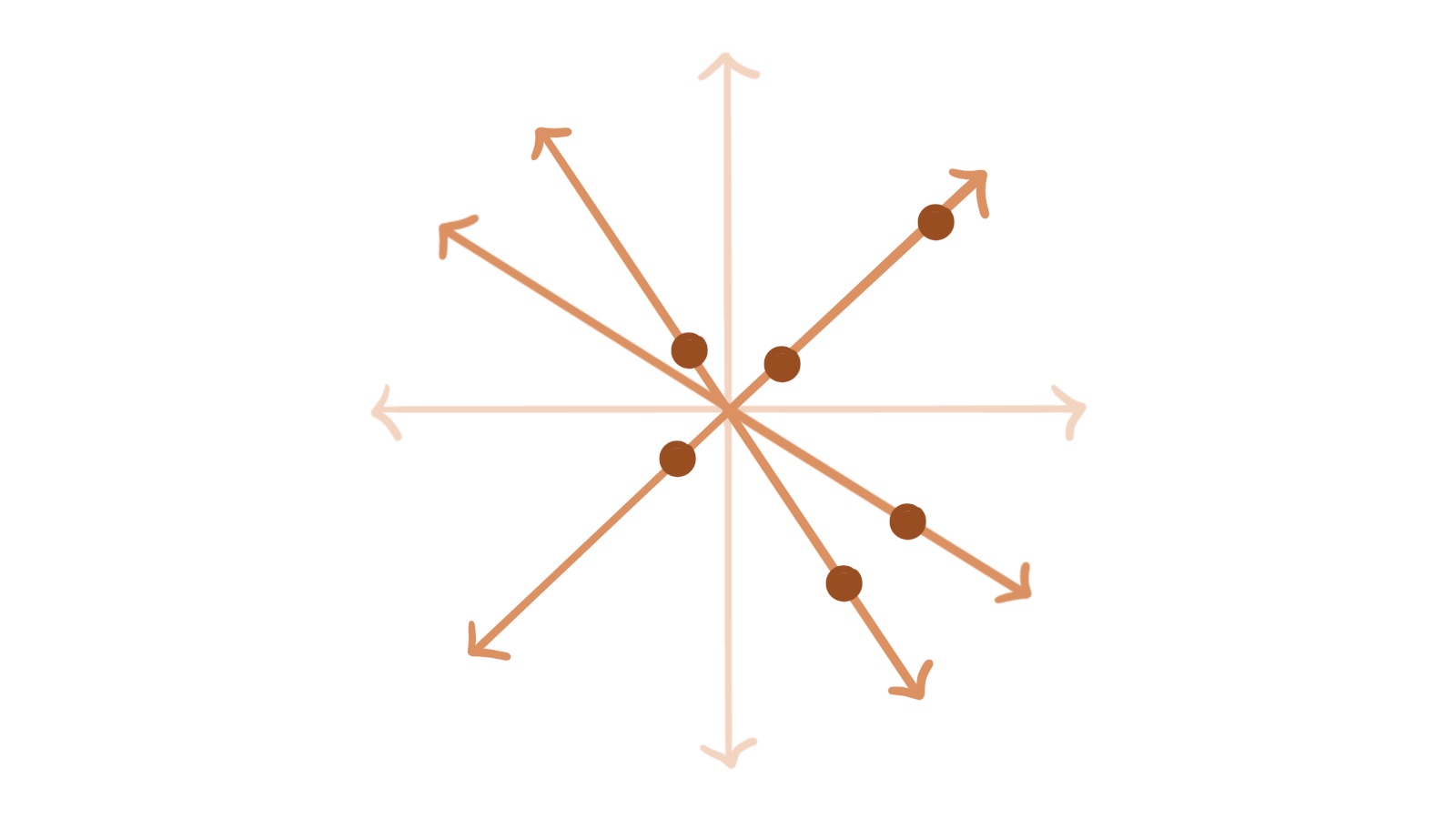} 
    \caption{A set of points and their supporting lines.} 
\end{figure}

First we note that a point configuration with few dot products must have many supporting lines.

\begin{lemma}[Lower bound on \# of supporting lines]  
    Let $P_n$ be a set of $n$ points in $\R^2$ with at most $m$ dot products. Then, $P_n$ has at least $n/m$ supporting lines. 
\end{lemma}

\begin{proof}
    By Lemma~\ref{bounding-dot-products-on-line}, each supporting line contains at most $m$ points, so the result follows.
\end{proof}

The next result shows that a point configuration with few dot products cannot have \emph{too many} supporting lines.

\begin{lemma}[Upper bound on \# of supporting lines, Lemma 3.1 in \cite{Hanson2023}]  \label{upper-bound-lines} 
    Let $P_n$ be a set of $n$ points in $\R^2$ with at most $m$ dot products. Then, $P_n$ has at most $64m^2/n$ supporting lines. 
\end{lemma}

\begin{proof} 
    This proof immediately follows from a lemma given in \cite{Hanson2023}, which uses the Szemerédi–Trotter theorem. Fix some configuration $P_n$, and let $L_n$ be its set of supporting lines. Lemma 3.1 from \cite{Hanson2023} says that there must exist some point $p \in P_n$ that determines many dot products with the other points in the set i.e.~$| \set{p \cdot q \mid q \in P_n} |\gg n^{1/2} |L_n|^{1/2}$.  An inspection of the proof shows that we have $| \set{p \cdot q \mid q \in P_n} | \geq \frac{1}{8} n^{1/2} |L_n|^{1/2}$. 

    It follows that:

    $$ m \geq |D(P_n)| \geq \frac{1}{8} n^{1/2} |L_n|^{1/2}.$$

    Rearranging, we have that $|L_n| \leq 64 \frac{m^2}{n}$. 
\end{proof}

The above result says that if a configuration has \emph{not too many} dot products, then it also has \emph{not too many} supporting lines. It follows that there exists one line which supports many points.

\begin{lemma}[Existence of a rich line] \label{rich-line}
    Let $P_n$ be a set of $n$ points in $\R^2$, with at most $m$ dot products. Then, $P_n$ must contain a supporting line with $n^2/(64m^2)$ points --- we call this the ``rich line.'' 
\end{lemma}

\begin{proof} 
    By  Lemma~\ref{upper-bound-lines}, there are at most $64 m^2 /n$ supporting lines. Therefore, at least one of these lines (what we call the ``rich line'') must contain at least $n^2/(64m^2)$ points. 
\end{proof}

\begin{figure}[H] \centering 
    \includegraphics[height=80px]{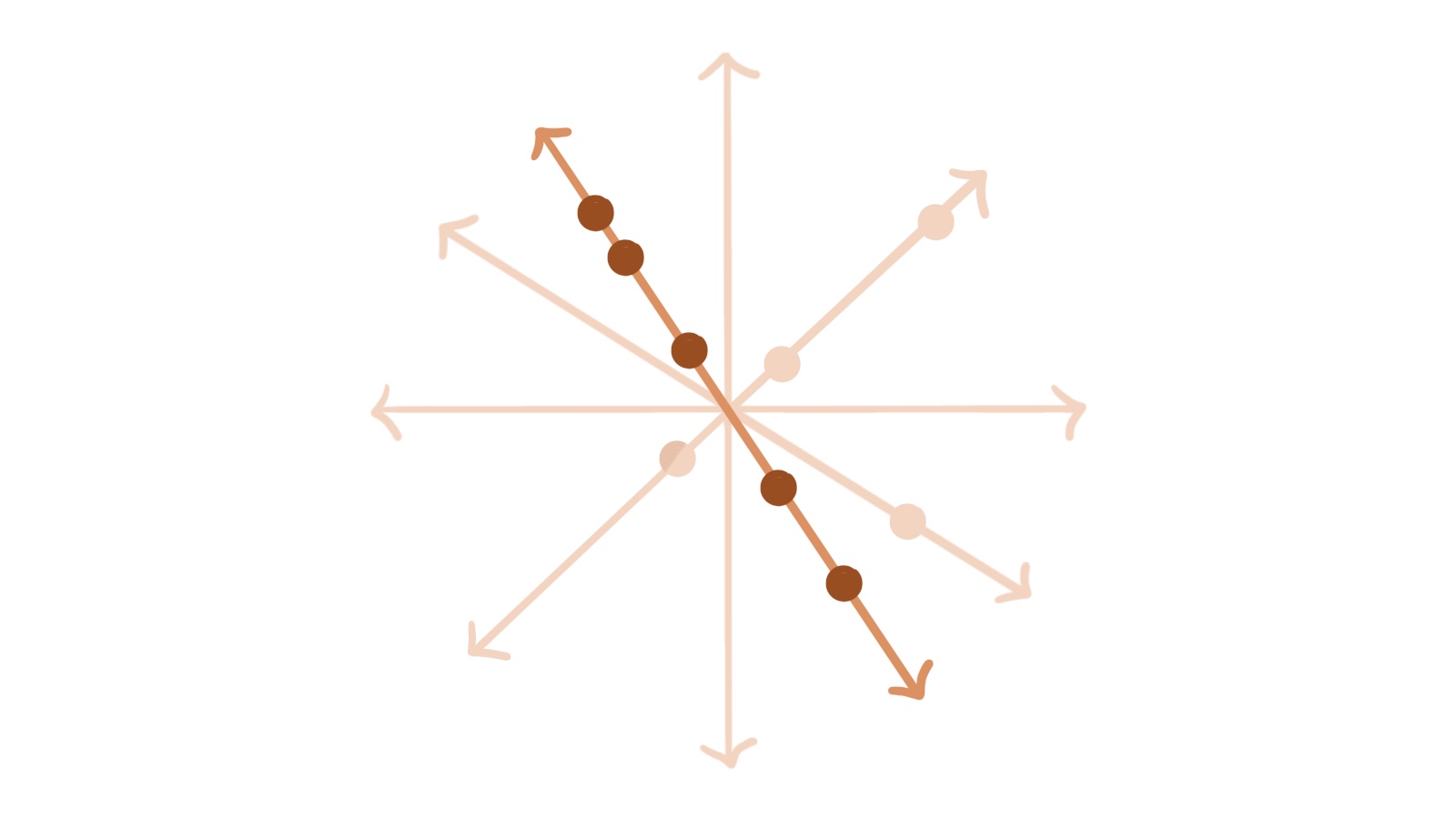} 
    \caption{If an $n$-point configuration has at most $m$ dot products, there exists some rich line with at least $n^2/(64m^2)$ points.} 
\end{figure}

\subsection{Existence of a rich circle}\label{circle-configurations}

We now turn our attention to cocircular points. Similarly to how we dealt with the case of points along a line, we will show that if the distinct dot product set is small, the point configuration must have a ``rich circle.'' Eventually, we will use the existence of this rich circle (in combination with the existence of a rich line) to construct many dot products, leading to our main result.

\begin{lemma}[Bounding dot products on a circle] 
    If we have a set $C \subset \mathbb{R}^2$ consisting of $n$ points \emph{placed anywhere} on a circle centered at the origin, then they determine at least $n/2$ distinct dot products. 
\end{lemma}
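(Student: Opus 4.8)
The plan is to mirror the structure used for the line case, reducing the problem to a well-understood combinatorial growth phenomenon. As established in the circle lemma above, the number of distinct dot products among $n$ points on a circle centered at the origin equals the number of distinct values of $\cos(\arg c_j - \arg c_k)$, which (since $\cos$ is injective on $[0,\pi]$) is exactly the number of distinct \emph{unsigned angular differences} $|\theta_j - \theta_k| \bmod \pi$ between the points. So the claim $|D(C)| \gtrsim n$ is equivalent to showing that $n$ points with arguments $\theta_1, \dots, \theta_n$ determine $\gtrsim n$ distinct pairwise differences in $\mathbb{R}/\pi\mathbb{Z}$.

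First I would pass from the circle to the torus $\mathbb{R}/\pi\mathbb{Z}$ by setting $A := \{\theta_j \bmod \pi\}$, the set of (reduced) arguments; since the $n$ points are distinct and lie on the circle, and antipodal points share a dot-product relation but still give distinct arguments mod $\pi$ only when genuinely coincident, I would first verify that $|A| \gtrsim n$ (a point and its antipode give the same reduced argument, so at worst we lose a factor of $2$, which is harmless for a $\gtrsim$ bound). The key step is then to invoke the same linear lower bound on \emph{difference sets} that powered the line lemma: for any finite set $A$ in an abelian group, the difference set $A - A$ satisfies $|A - A| \geq 2|A| - 1$, and the set of \emph{unsigned} differences still has size $\gtrsim |A|$. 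Applying this with $A$ the set of reduced arguments yields $\gtrsim n$ distinct angular differences, hence $\gtrsim n$ distinct values of $\cos$, hence $|D(C)| \gtrsim n$.

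The step I expect to require the most care is the translation between the additive structure on arguments and the multiplicative/geometric structure of dot products, specifically confirming that distinct elements of the unsigned difference set in $\mathbb{R}/\pi\mathbb{Z}$ really do map to distinct cosines. The subtlety is that $\cos$ is injective on the fundamental domain $[0,\pi]$, so one must fold each difference into $[0,\pi]$ consistently and check that this folding does not collapse too many differences together; because folding $\theta \mapsto \min(\theta, 2\pi - \theta)$ is at most two-to-one, it preserves the lower bound up to a constant factor. Once that bookkeeping is settled, the result follows cleanly from the difference-set bound, exactly paralleling how the line lemma rested on the linear growth of product sets.
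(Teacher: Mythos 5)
Your argument is, at bottom, the paper's own proof translated from multiplicative to additive notation: the paper works with the ratio set $C/C$ of unit complex numbers, which is precisely the difference set of arguments in $\mathbb{R}/2\pi\mathbb{Z}$; its step ``$|C/C| \sim |C\cdot C|$'' is your observation that passing from an angular difference to its cosine is at most two-to-one (since $\cos$ determines $\sin$ up to sign); and its step ``fix the denominator to get $|C/C| \geq |C|$'' is exactly your fix-one-element lower bound on the difference set. So the route is essentially the same, and your conclusion is correct.

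Two claims along the way need repair, though neither is load-bearing. First, the asserted lemma that $|A-A| \geq 2|A|-1$ \emph{for any finite set $A$ in an abelian group} is false in groups with torsion, and $\mathbb{R}/\pi\mathbb{Z}$ is such a group: take $A$ to be a finite subgroup (which is exactly what equally spaced points produce after your reduction); then $A - A = A$, so $|A-A| = |A| < 2|A|-1$. Only the translate bound $|A-A| \geq |A|$ survives in this setting --- fortunately that is all you need, and it is the bound you actually fall back on when you write that the set of unsigned differences ``still has size $\gtrsim |A|$.'' Second, the count of dot products is not \emph{exactly} the number of unsigned differences modulo $\pi$: cosine identifies a difference $\delta$ with $2\pi - \delta$, and modulo $\pi$ this is $\pi - \delta$ rather than $\delta$, so distinct classes mod $\pi$ can share a cosine (e.g.\ the differences $\pi/3$ and $5\pi/3$, which reduce to the distinct classes $\pi/3$ and $2\pi/3$, both have cosine $1/2$). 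The clean statement is that differences live in $\mathbb{R}/2\pi\mathbb{Z}$, where the map to cosines is at most two-to-one, and reduction mod $\pi$ is also at most two-to-one; combined with your antipodal bookkeeping ($|A| \geq n/2$) this still yields $|D(C)| \geq \tfrac{1}{4}|A| \gtrsim n$. With these corrections your proof is sound and coincides in substance with the paper's.
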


\begin{proof} 
    Given a point $p$ on the circle, at most two other points on the circle determine the same dot product with $p$.\footnote{For a fixed dot product $d$, the line $p_x x + p_y y = d$ contains all points $(x,y)$ that determine the dot product $d$ with $p=(p_x, p_y)$, and this line can intersect a circle at most twice.}  Thus, the set of dot products determined by $p$ alone has size at least $n/2$, and thus the size of the total set of dot products must be at least as big.
\end{proof}

Note that the above lemma is nearly sharp, since $n$ points equally spaced on a circle determine $\lfloor n/2\rfloor+1$ dot products.  For the purposes of this paper, this bound will suffice.

Next, we can bound the number of supporting circles of a configuration in a manner very similar to how we bound the number of supporting lines.

\begin{defn}[Supporting circles]
    Given a set of points $P \subset \mathbb{R}^2$, let their \textit{supporting circles} be the circles centered at the origin with radii taken from the set $\set{\sqrt{p_x^2 + p_y^2} \mid (p_x, p_y)  \in P}$.
\end{defn}

\begin{figure}[H] \centering 
    \includegraphics[height=80px]{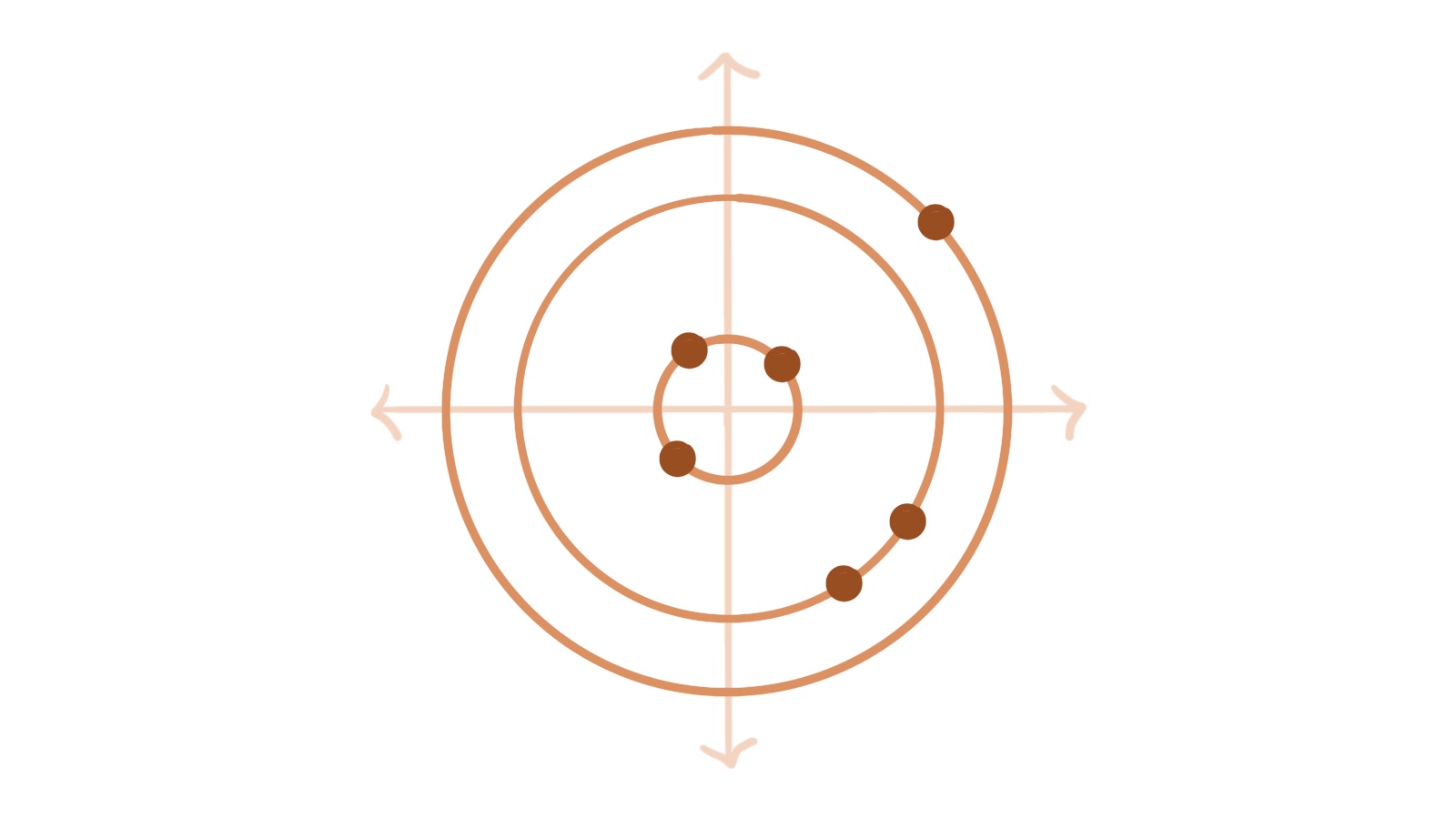} \caption{A set of points and their supporting circles.}
\end{figure}

The next result shows that in any point configuration with few dot products, the number of supporting circles in the configuration must grow with $n$.

\begin{lemma}[Lower bound on \# of supporting circles]
    Let $P_n$ be a set of $n$ points in $\R^2$, with at most $m$ dot products. Then, $P_n$ has at least $n^2/(128m^2)$ supporting circles.  \end{lemma}

\begin{proof}
    We know there must be a rich line with at least $n^2/(64m^2)$ points by Lemma~\ref{rich-line}, and so we have at least half as many supporting circles (since at most two points on the line may lie on the same circle). $\square$
\end{proof} 

Our next result shows that upper-bounding the number of dot products produced by a point configuration also upper-bounds the number of supporting circles in the configuration.

\begin{lemma}[Upper bound on \# of supporting circles] \label{upper-bound-circles}
    Let $P_n$ be a set of $n$ points in $\R^2$, with at most $m$ dot products. Then, $P_n$ has at most $m$ supporting circles. 
\end{lemma}

\begin{proof}
    Take one point $p$ from each supporting circle, and note that the dot product of that point with itself $p \cdot p$ must equal the squared radius of its supporting circle. Thus, the number of distinct dot products is at least the number of supporting circles, and the result follows. 
\end{proof}

Intuitively, we are saying that if a configuration has not too many dot products, then it also has not too many supporting circles. It should therefore follow that there exists one circle which supports many points.

\begin{lemma}[Existence of a rich circle] \label{rich-circle}
    Let $P_n$ be a set of $n$ points in $\R^2$ with at most $m$ dot products. Then, $P_n$ must contain a supporting circle with $n/m$ points --- we call this the ``rich circle.''  
\end{lemma}

\begin{proof}
    By the preceding Lemma~\ref{upper-bound-circles}, there must be at most $m$ supporting circles. Thus, by an averaging argument, at least one of these circles (which we call the ``rich circle'') must contain at least $n/m$ points.
\end{proof}

\begin{figure}[H] \centering 
    \includegraphics[height=80px]{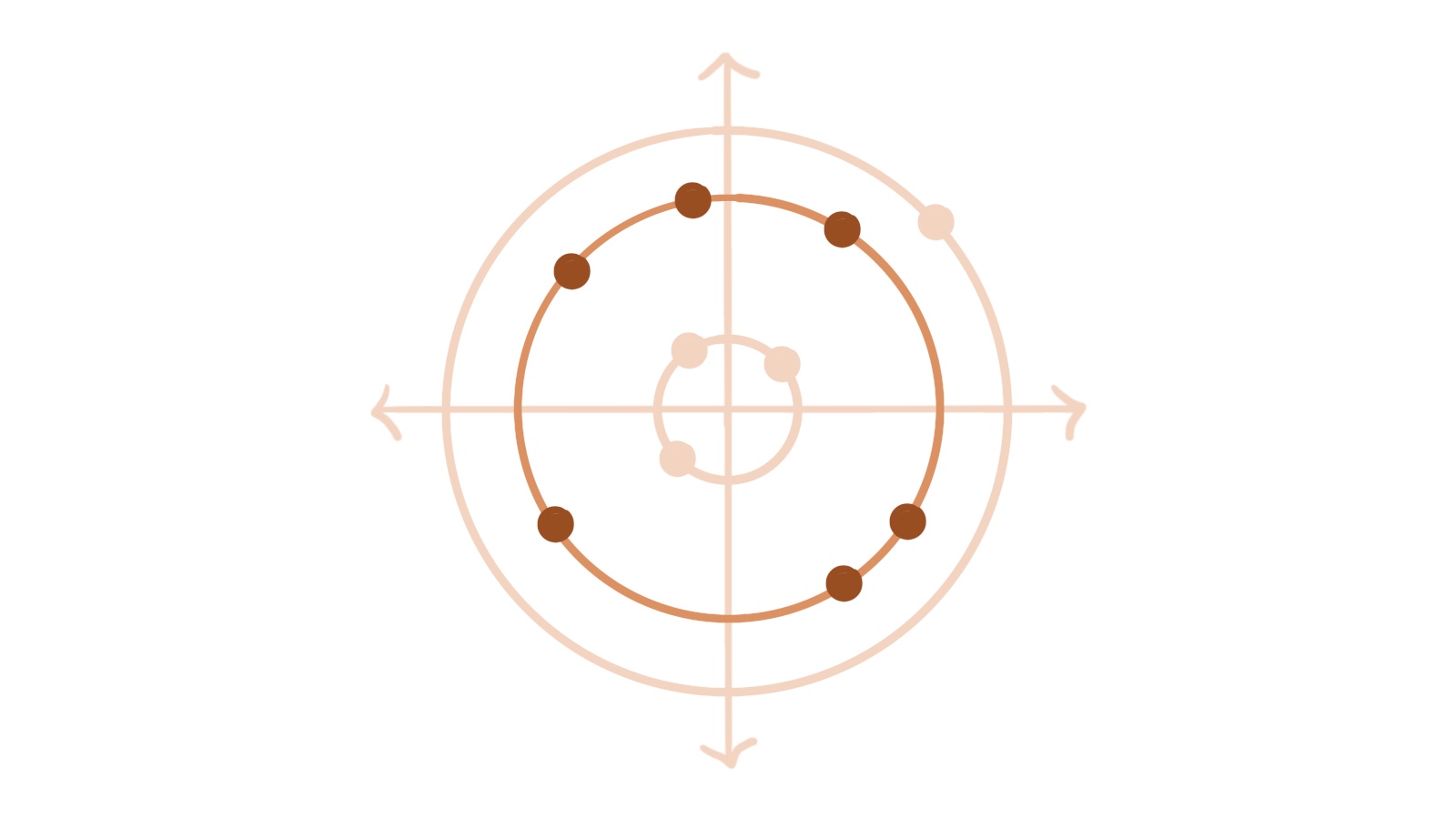} 
    \caption{If an $n$-point configuration has at most $m$ dot products, there exists some rich circle with at least $n/m$ points.} 
\end{figure}

\section{Dot products between a line and circle}\label{line-and-circle-configurations}

As we have shown, a point configuration with few dot products must have both a rich circle and a rich line. It turns out that counting just the dot products between this circle and this line (if the points are well-spaced enough) will yield enough dot products for our bound.

\subsection{Existence of many dot products between a well-spaced  line and circle}\label{bounding-dot-products-between-a-rich-line-and-rich-circle}

To prove the following lemma, we first establish the definition of a ``well-spaced" set of points along a line.

\begin{defn}[Well-Spaced Set]
    Let $L$ be a set of points collinear along a line through the origin. Let us say that $L$ is \emph{well-spaced} with respect to some $b \in (0,1)$ if every consecutive pair of points $\ell_i,\ell_{i+1} \in L$ (where $|\ell_i|<|\ell_{i+1}|$) satisfies $|\ell_i|/|\ell_{i+1}| \leq b$.
\end{defn}

\begin{lemma}[Existence of many dot products between a crowded circle and well-spaced line] \label{circle-and-line}

    Let $b \in (0,1)$. Let $P$ be a set of $n$ points in $\R^2$ consisting of:
    
    \begin{itemize}
        \item (A well-spaced line). A set $L$ of points on a radial line which is well-spaced with respect to $b$.
        \item (A crowded circle). A set $C$ of points on a circle centered at the origin, where the angle $\theta$ between every point in $C$ and the line containing $L$ satisfies $0 \leq \theta \leq \cos^{-1}(b)$.
    \end{itemize}
    
    Then, the number of dot products $|D(P)|$ satisfies $|D(P)| \geq \frac{1}{4} |C||L|$.
\end{lemma}  

\begin{proof}

    Scaling and rotating a configuration of points does not change the number of distinct dot products. Thus, we can assume all points in $C$ lie on the unit circle and all points in $L$ lie on the $x$-axis.

    If the $x$-axis partitions the points in $C$, it suffices to consider only the points in $C$ on one side of the $x$-axis -- specifically the side containing the majority of the points. By an averaging argument, this ``rich part'' of the circle contains at least $|C|/2$ points. Without loss of generality, assume these points are in the upper half-plane. Similarly, we consider only the ``rich ray'' from the origin of the line $L$, which contains at least $|L|/2$ points.\footnote{Note that the choice of ray will change the sign of dot products, but since either all dot products will be negative, or all will be positive, it does not change the number of distinct dot products.  Thus we assume in the rest of the proof, without loss of generality, that these dot products are positive.}

    Fix a point $\ell_i \in L$. Note that since every point in $C$ has an angle $0 \leq \theta \leq \cos^{-1}(b)$, the cosine values satisfy $b \leq \cos \theta \leq 1$. Consequently, all dot products formed by $\ell_i$ and the set $C$ fall within the interval $I_i = [|\ell_i|b, |\ell_i|].$  So we have:

    $$\ell_i \cdot C := \set{\ell_i \cdot c \mid c \in C} \subseteq [|\ell_i|b, |\ell_i|].$$

    We now observe that these intervals are disjoint for distinct points $l_i \neq l_j \in L$. Consider two points $\ell_i, \ell_j \in L$ with $|\ell_i| < |\ell_j|$. By the well-spaced condition, we know that $|\ell_{i}| < b|\ell_j|$. This implies:

    $$ \max(I_i) = |\ell_i| < b|\ell_j| = \min(I_j),$$

    and therefore any two distinct intervals are disjoint.

    There is a geometric way to see this disjointness as well.  Note that $\ell \cdot c = Re(\ell \bar{c})$.  That is, the dot products between points in $L$ and points in $C$ are given by the real parts of the set of complex numbers $\set{\ell \bar{c} | \ell \in L, c \in C}$, which we shall refer to as the set of ``complex dot products" $L \star C$.  We can note that the complex dot products between a fixed point in $\ell \in L$ and points in $C$ are given by points with radius $|\ell|$ and angles in $C$.  Therefore, for $\ell_i \neq \ell_j$, the real projections of $\ell_i \star C$ are disjoint from $\ell_j \star C$.

    Thus, the total number of distinct dot products is at least the number of points in $L$ on the `popular' ray from the origin, times the number of points in $C$ on the `popular' side of $L$:
    
    $$|D(P)|  \geq |\set{\ell \cdot c \mid \ell \in L, c \in C} | \geq \left(\frac{|L|}{2}\right) \left(\frac{|C|}{2}\right) = \frac{1}{4} |C||L|.$$
\end{proof}

We know that in any point configuration with few dot products, there exists a rich line by Lemma~\ref{rich-line} and a rich circle by Lemma~\ref{rich-circle}. We will now apply the preceding Lemma \ref{circle-and-line} to this rich line and rich circle to prove the existence of many dot products under certain conditions.  In particular, as long as all sufficiently rich lines are well-spaced, we will end up with many dot products.

\begin{lemma}[A line-spacing condition for many dot products]\label{line-spacing-for-many-dot-products} 
    Fix $\epsilon \in (0,1]$ and $\delta \in [0,1]$. Let $P$ be a set of $n$ points in $\R^2$, for $n$ sufficiently large.    Suppose that every set of points $L \subseteq P$ along a line through the origin with $|L| \geq 2^{-2/3} n^{\frac{1}{2}(1-\epsilon+3\delta)}$ has a subset $L' \subseteq L$ such that  $|L'|\geq n^{-3\delta} |L|$ and $L'$ is well-spaced with respect to $b$ where $b=\cos(2^{11/3}\pi n^{-\epsilon})$.  Then, $|D(P)| \geq   n^{\frac{3}{4}(1-\epsilon-\delta)}$.
 \end{lemma}

\begin{proof}
    
    Suppose the configuration has strictly fewer than $ n^{\frac{3}{4}(1-\epsilon-\delta)}$ dot products.
    
    First, a sketch of the proof: We strategically choose a densely populated and wedge-shaped subsection of the plane from which to select a rich circle and rich line. The angle of this wedge is exactly $\cos^{-1} (b)$, and therefore, the rich circle must be a `crowded' circle.\footnote{Note that we need $n$ sufficiently large here so that $b=\cos(2^{11/3}\pi n^{-\epsilon})$ is well-defined.}  By considering this crowded circle along with its nearby rich line (which is well-spaced by assumption), we get many dot products by the preceding Lemma~\ref{circle-and-line}. The full argument is as follows.
    
    Let $P_\theta \subseteq P$ consist of points contained in one wedge of the plane, in particular: $$P_\theta := \set{p \in P | \theta \leq \arg p \leq \theta + \cos^{-1}(b)}.$$ 
    
    \begin{figure}[H] \centering 
        \includegraphics[height=150px]{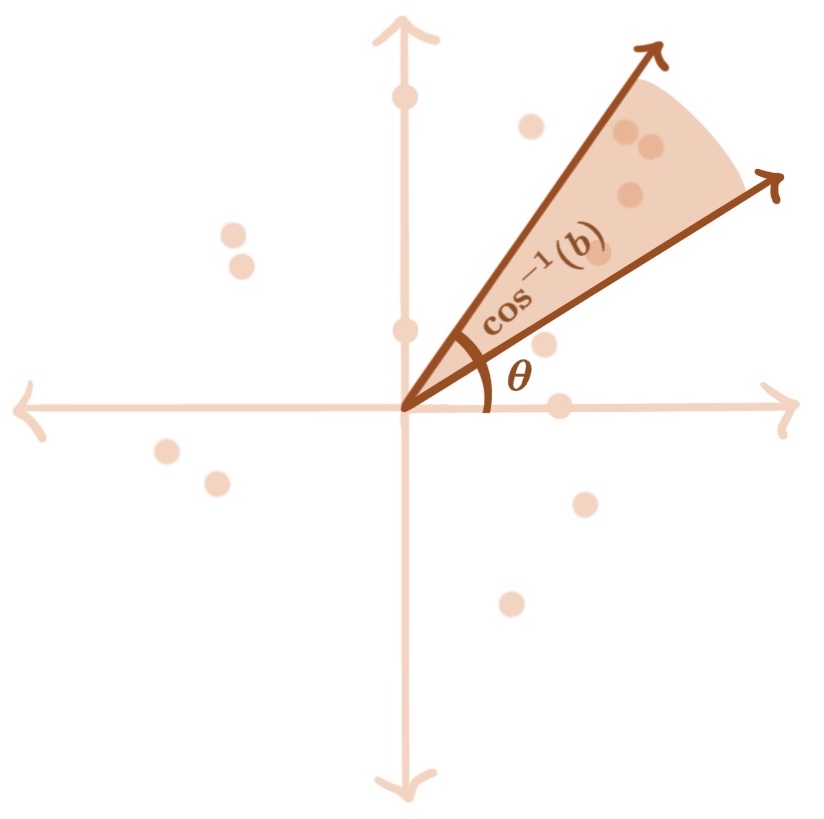}
        \caption{$P_\theta$ contains points from $P$ in one wedge of the plane.}
    \end{figure}
    
    Let $P_{\max}$ be a largest such set $P_\theta$, i.e., $$|P_{\max}| := \max_{\theta \in [0,2\pi)} |P_\theta|.$$
    
    Note that by an averaging argument $|P_{\max}| \geq \left(\frac{\cos^{-1}(b)}{2\pi}\right) n = 2^{8/3} n^{1-\epsilon}$.
    
    In this set of $2^{8/3} n^{1-\epsilon}$ points with fewer than $ n^{\frac{3}{4}(1-\epsilon-\delta)}$ dot products, we must have,  by Lemma \ref{rich-line} (existence of a rich line), a set $L$ of points along a line where
        \[
        |L| \geq \frac{(2^{8/3} n^{1-\epsilon})^2}{64\left( n^{\frac{3}{4}(1-\epsilon-\delta)}\right)^2} = 2^{-2/3}n^{\frac{1}{2}(1-\epsilon+3\delta)},
        \]
    and, by Lemma~\ref{rich-circle} (existence of a rich circle), a set $C$ of points on a circle where
        \[
        |C| \geq \frac{2^{8/3} n^{1-\epsilon}}{ n^{\frac{3}{4}(1-\epsilon-\delta)}}  = 2^{8/3} n^{\frac{1}{4}(1-\epsilon+3\delta)}.
        \]
        
    By assumption, there is some subset $L' \subseteq L$  with $|L'| \geq n^{-3\delta}|L|$ such that $C$ and $L'$ satisfy the conditions in the preceding Lemma~\ref{circle-and-line}, so we have
    \begin{equation*}
        |D(P)| \geq \frac{1}{4} |C| \left(n^{-3\delta}|L|\right) = n^{\frac{3}{4}(1-\epsilon-\delta)},
    \end{equation*}
    which contradicts the starting assumption that $|D(P)| <  n^{\frac{3}{4}(1-\epsilon-\delta)}$.
\end{proof}

\begin{remark}
    Note that the $3/4$ exponent is the best exponent this argument can yield. If we try to go through the above argument with a general $n^\alpha$, we have: Suppose $D(P_n) < c n^{\alpha}$ for some constant $c$.   Then, there must exist a supporting line with at least some constant times $n^{2-2\alpha}$  points--- we call this the ``rich line.'' And there must exist a   supporting circle with some constant times $n^{1-\alpha}$ points --- we call this   the ``rich circle.'' By Lemma~\ref{circle-and-line}, the configuration must   yield some constant times $ (n^{2-2\alpha}) (n^{1-\alpha})$ dot products i.e. $K n^{3-3\alpha}$ dot products for some  constant $K$. That is, we assumed $|D(P_n)| < c n^{\alpha}$, but found   $|D(P_n)| \geq Kn^{3-3\alpha}$, which implies  $\alpha \geq 3/4$.

\end{remark}

\subsection{A structural condition on a rich line}\label{a-density-condition-on-line}

A modified contrapositive of the above theorem shows that we can always find a rich and clustered line in any configuration with few dot products.

\begin{lemma}[Existence of a clustered line] \label{one-clustered-line} 
    Fix $\epsilon \in (0,1]$ and $\delta \in [0,1]$. Let $P$ be a set of $n$ points in $\mathbb{R}^2$, with $n$ sufficiently large, where $|D(P)| < n^{\frac{3}{4}(1-\epsilon-\delta)}$. Then there exists a set of points  $L \subseteq P$ along a line through the origin with $|L| \geq k_1  n^{\frac{1}{2}(1-\epsilon+3\delta)}$ such that at least $\floor{(1-n^{-3\delta})|L|}$ consecutive pairs of points along the line are $(1-k_2n^{-2\epsilon})$-close where $k_1 = 2^{-2/3}$ and $k_2 = 2^{19/3}\pi^2$.
\end{lemma}

\begin{proof}
    The contrapositive of the preceding Lemma~\ref{line-spacing-for-many-dot-products} says the following: for any $\epsilon, \delta \in (0,1]$, if $|D(P)| <  n^{\frac{3}{4}(1-\epsilon-\delta)}$, then there exists a set of collinear points $L \subseteq P$ with $|L|\geq 2^{-2/3} n^{\frac{1}{2}(1-\epsilon+3\delta)}$ where all subsets $L' \subseteq L$ which are well-spaced with respect to $b=1-k_2n^{-2\epsilon}$ have size $|L'|<n^{-3\delta}|L|$. (Note that since $\cos x \geq 1 - \frac{x^2}{2}$ for all real $x$, any pair that is well-spaced with respect to $\cos(2^{11/3}\pi n^{-\epsilon})$ is also well-spaced with respect to $1-k_2n^{-2\epsilon}$.)
    
    Suppose there are $p$ consecutive pairs that are $b$-close.  Then, since there are $|L|-1$ total consecutive pairs, there are $(|L|-1)-p$  consecutive pairs that are not $b$-close.  Choosing just the point with the smaller modulus from each of these $(|L|-1)-p$ pairs will yield a set of points that is well-spaced with respect to $b$.  Since any such set has size strictly less than $n^{-3\delta} |L|$, we know $(|L|-1)-p < n^{-3\delta} |L|$, and therefore $p > (1-n^{-3\delta})|L|-1$.  Since $p$ is an integer, the result follows.
\end{proof}

The preceding lemma can be strengthened by iterating the argument.  We showed that any point configuration with few dot products must contain one rich clustered line.  However, after finding such a line, we may remove its points from the configuration and repeat the argument on the remaining set, implying the set can be partitioned into many such rich clustered lines.  This argument yields our main result.

\medskip

\begin{thmn}[\ref{many-clustered-lines}]{\normalfont  
    (Existence of many clustered lines).} Fix $\epsilon \in (0,1]$ and $\delta \in [0,1]$. Let $P$ be a set of $n$ points in $\mathbb{R}^2$, with $n$ sufficiently large, where $|D(P)| < \left( \frac n2 \right)^{\frac34(1-\epsilon-\delta)}$. Then $P$ contains a subset $P'\subseteq P$ with $|P'|\ge  \frac n2 $ which can be partitioned into disjoint sets $P'=L_1\cup\cdots\cup L_k$, each contained in a radial line, such that $|L_i|\ge k_1 \left( \frac n2 \right)^{\frac12(1-\epsilon+3\delta)}$ and at least $\floor{(1-\left(\frac n2\right)^{-3\delta})|L_i|}$ consecutive pairs of points in $L_i$ are $(1-k_2 \left( \frac n2 \right)^{-2 \epsilon})$-close, where $k_1 = 2^{-2/3}$ and $k_2 = 2^{19/3}\pi^2$.
\end{thmn}
\begin{proof}
    Suppose that $P$ is a set with $|D(P)| <  \left( \frac n2 \right)^{\frac{3}{4}(1-\epsilon-\delta)}$. By Lemma~\ref{one-clustered-line}, there exists a clustered line $L_1 \subseteq P$ containing at least $k_1\left( \frac n2 \right)^{\frac12(1-\epsilon+3\delta)}$ points. Remove $L_1$ from the configuration and consider the remaining set.  If the remaining set still contains at least $n/2$ points, then the same argument applies again, yielding another rich clustered line $L_2$. By continuing this process until the remaining set has less than $n/2$ points, we eventually obtain a collection of disjoint clustered lines $L_1,L_2,\dots,L_k$.
\end{proof}

Thus, any point configuration with few dot products contains a large subset which can be partitioned into rich clustered lines.  

\section{Conclusion}\label{conclusion-a-possible-sketch-of-future-work}

In this paper, we showed that in point configurations with certain spacing conditions (dependent on parameters $\epsilon$ and $\delta$), just counting the dot products between a rich line and a `nearby' rich circle is enough to force  $|D(P)| \geq n^{\frac{3}{4}(1-\epsilon-\delta)}$. 

However, a natural roadblock to proving an $n^{3/4}$ bound in full generality by counting contributions from one rich line and one rich circle is the fact that it is possible to arrange $n^{1/2}$ points on a rich line and $n^{1/4}$ points on a rich circle in a way that only yields $\Theta(n^{1/2})$ dot products.\footnote{For example, consider the sets $L:= \set{(2^i,0) : i \in [\lfloor{n^{1/2}}\rfloor]}$ and $C := \set{e^{i\cos^{-1}(1/2^i)} : i \in [n^{1/4}]}$, and note that they determine few dot products i.e. $|\set{c \cdot \ell : c \in C, \ell \in L}| = \Theta(n^{1/2})$. However, in Lemma \ref{line-spacing-for-many-dot-products}, we choose points along a rich circle and rich line which all belong to one narrow wedge of the plane (in particular, a wedge whose angle is related to the spacing of the rich line, which in this case would be the angle $\cos^{-1}(\frac{1}{2}) = 60^\circ$), and this set of points does not meet this `fits in a narrow wedge' requirement.} Still, this condition is highly constraining -- it seems unlikely that \textit{every} rich line and rich circle in a configuration would determine this few dot products.  

Future work on this problem may involve showing that, in point configurations with few dot products, while it is not true that \emph{every} rich circle and rich line yields $\Omega(n^{3/4})$ dot products, there may \emph{exist} a rich circle and rich line in any such configuration which do determine $\Omega(n^{3/4})$ dot products.  Alternatively, rather than relying on a single rich line–rich circle interaction, one could instead use the fact that configurations with few dot products contain \textit{many} clustered lines (as stated in Theorem \ref{many-clustered-lines}), and attempt to improve the existing lower bound by counting dot product contributions across this entire family of rich, clustered lines.

\section*{Acknowledgments}

I thank Veronica Bitonti, Benjamin Friedman, Ritesh Goenka, Timothy Gowers, Kenneth Moore, A.Y. Odedeyi, and Rikhav Shah for valuable feedback.

\bibliography{refs}{} \bibliographystyle{plain}

\end{document}